\theoremstyle{definition}
\newtheorem{theorem}{Theorem}[section]
\newtheorem{lemma}[theorem]{Lemma}
\newtheorem{proposition}[theorem]{Proposition}
\newtheorem{corollary}[theorem]{Corollary}
\newtheorem{remark}[theorem]{\rm Remark}
\newtheorem{question}[theorem]{\rm Question}
\DeclareMathOperator{\Ker}{Ker}
\DeclareMathOperator{\Mod}{Mod}
\DeclareMathOperator{\Homeo}{Homeo}
\newcommand{\RR}{\mathbb{R}}
\newcommand{\ZZ}{\mathbb{Z}}
\newcommand{\GG}{\Gamma}
\newcommand{\hGG}{\widehat{\GG}}
\newcommand{\Om}{\Omega}
\newcommand{\hOm}{\widehat{\Om}}
\renewcommand{\gg}{\gamma}
\newcommand{\h}{\Homeo_+(S^1)}
\renewcommand{\th}{\mathrm{H}\widetilde{\mathrm{omeo}}_+(S^1)}
\newcommand{\tf}{\widetilde{f}}
\newcommand{\tg}{\widetilde{g}}
\renewcommand{\Im}{\mathrm{Im}}
\newcommand{\trho}{\widetilde{\rho}}
\newcommand{\HHH}{\mathrm{H}}
\title{A crossed homomorphism for groups acting on the circle}
\author{Shuhei Maruyama}
\address{Graduate School of Mathematics, Nagoya University, Japan}
\email{m17037h@math.nagoya-u.ac.jp}
\subjclass[2020]{57K20; 37E10, 20J06}
\keywords{crossed homomorphism; group actions on the circle.}
\begin{document}

\begin{abstract}
We construct a crossed homomorphism by using a group action on the circle and the Poincar\'{e} translation number.
We relate it to the Euler class of the action in terms of the Hochschild--Serre spectral sequence.
As an application, we answer a question of Calegari and Chen, which is on an explicit form of a certain crossed homomorphism on the mapping class group of the sphere minus a Cantor set.
\end{abstract}

\maketitle

\section{Introduction}
A group extension $1 \to K \xrightarrow{i} \GG \xrightarrow{p} G \to 1$ induces an exact sequence of cohomology groups
\begin{align*}
  0 &\to \HHH^1(G;\ZZ) \to \HHH^1(\GG;\ZZ) \to \HHH^1(K;\ZZ)^{\GG} \to \HHH^2(G;\ZZ) \\
  &\to \Ker\big(\HHH^2(\GG;\ZZ) \xrightarrow{i^*} \HHH^2(K;\ZZ)\big) \xrightarrow{\phi} \HHH^1(G;\HHH^1(K;\ZZ)) \xrightarrow{d_2^{1,1}} \HHH^3(G;\ZZ)
\end{align*}
called the seven-term exact sequence.
In this paper, we describe the image of the Euler class in $\Ker\big(\HHH^2(\GG;\ZZ) \xrightarrow{i^*} \HHH^2(K;\ZZ)\big)$ of a $\GG$-action on the circle under the map $\phi$ in terms of the Poincar\'{e} translation number.

Let $\h$ be the group of orientation preserving homeomorphisms of the circle and $\th$ its universal covering group.
It is well known that the second cohomology group $\HHH^2(\h;\ZZ)$ is isomorphic to $\ZZ$ and that the generator $e$ of $\HHH^2(\h;\ZZ)$ is called the \emph{(integral) Euler class of} $\h$.
For a homomorphism $\rho \colon \GG \to \h$, the pullback $\rho^*e \in \HHH^2(\GG;\ZZ)$ of the Euler class is called the \emph{Euler class of} $\rho$.

Assume that the Euler class $\rho^*e$ of $\rho$ is contained in $\Ker\big(\HHH^2(\GG;\ZZ) \xrightarrow{i^*} \HHH^2(K;\ZZ)\big)$.
In this paper, we provide a cocycle description of the class $\phi(\rho^*e) \in \HHH^1(G;\HHH^1(K;\ZZ))$.
Note that the cocycles of the class in $\HHH^1(G;\HHH^1(K;\ZZ))$ are just crossed homomorphisms $G \to \HHH^1(K;\ZZ)$ (see Subsection \ref{subsec:prel_crossed}).
Note also that, for the mapping class groups of closed hyperbolic surfaces with once-marked point, a similar construction of crossed homomorphisms was recently given by Chen \cite{2301.06247}, which uses the Dehn-Nielsen-Baer theorem (see Remark \ref{rem:Chen}).

The crossed homomorphism is given as follows.
Let $\rho \colon \GG \to \h$ be a homomorphism whose Euler class $\rho^*e$ is contained in $\Ker\big(\HHH^2(\GG;\ZZ) \xrightarrow{i^*} \HHH^2(K;\ZZ)\big)$.
By this assumption, the composite $\rho \circ i \colon K \to \h$ lifts to a homomorphism
\[
  \trho \colon K \to \th
\]
(see Proposition \ref{prop:lift_euler_class}).
Let $\tau \colon \th \to \RR$ be the Poincar\'{e} translation number.
We define a map $k'_{\trho} \colon \GG \to \HHH^1(K;\ZZ)$ by
\[
  k'_{\trho} (\gg) = \trho^*\tau - {}^{\gg}(\trho^*\tau)
\]
for every $\gg \in \GG$.
Here ${}^{\gg}(\trho^*\tau)$ is defined by
\[
  \big({}^{\gg}(\trho^*\tau)\big)(x) = \trho^*\tau(\gg^{-1} x \gg).
\]
for every $x \in K$.
We will see that the map $k'_{\trho} \colon \GG \to \HHH^1(K;\ZZ)$ is a well-defined crossed homomorphism (Lemmas \ref{lem:welldef} and \ref{lem:crossed}) and descends to $k_{\trho} \colon G \to \HHH^1(K;\ZZ)$ (Lemma \ref{lem:descend}).
Moreover, we will also see that the class $[k_{\trho}] \in \HHH^1(G;\HHH^1(K;\ZZ))$ does not depend on the choice of the lift $\trho$.

The main result in this paper is the following:
\begin{theorem}\label{thm:main_gen}
  The class $[k_{\trho}] \in \HHH^1(G;\HHH^1(K;\ZZ))$ represented by the crossed homomorphism $k_{\trho} \colon G \to \HHH^1(K;\ZZ)$ is equal to $\phi(\rho^*e)$.
  In particular, the class $[k_{\trho}] \in \HHH^1(G;\HHH^1(K;\ZZ))$ depends only on the semi-conjugacy class of $\rho$.
\end{theorem}

As an example, we consider the following big mapping class groups.
Let $C$ be a Cantor set in $S^2$ and set $\hOm = S^2 \setminus C$.
We take a point $\infty \in \hOm$, and set $\Om = \hOm \setminus \{ \infty \} = \RR^2 \setminus C$.
For the mapping class group $\hGG$ (resp. $\GG$) of $\hOm$ (resp. $\Om$), we have the following Birman exact sequence
\begin{align}\label{seq:birman}
  1 \to \pi_1(\hOm) \xrightarrow{i} \GG \xrightarrow{p} \hGG \to 1.
\end{align}
The seven-term exact sequence applied to (\ref{seq:birman}) induces an exact sequence
\begin{align}\label{seq:seven-term}
  0 \to \HHH^2(\GG;\ZZ) \xrightarrow{\phi} \HHH^1(\hGG;\HHH^1(\pi_1(\hOm);\ZZ)) \xrightarrow{d_2^{1,1}} \HHH^3(\hGG;\ZZ)
\end{align}
since $\HHH^2(\hGG;\ZZ) = 0$ (\cite{MR4266358}) and $\HHH^2(\pi_1(\hOm);\ZZ) = 0$.
It is shown in \cite{2211.07470} that $\HHH^2(\GG;\ZZ)$ is isomorphic to $\ZZ$.

Several non-trivial actions of $\GG$ on the circle $S^1$ has been constructed (\cite{Cal04}, \cite{MR3852444}, \cite{MR4266358}).
It was shown in \cite{MR4266358} that any non-trivial $\GG$-action on $S^1$ is semi-conjugate (up to a change of orientation) to the action $\rho_S$ on the simple circle constructed in \cite{MR4266358}.
In particular, the pullback of the integral Euler class $e \in \HHH^2(\h;\ZZ)$ by a non-trivial action $\rho \colon \GG \to \Homeo_+(S^1)$ is equal to either $\rho_S^*e$ or $-\rho_S^*e$.
We call this class $\rho_S^*e$ the \textit{Euler class of $\GG$}.
Related to the Euler class of $\GG$, Calegari and Chen raised in \cite{MR4266358} the following question.
\begin{question}[{\cite[Question A.16]{MR4266358}}]\label{q2}
  What class in $\HHH^1(\hGG;\HHH^1(\pi_1(\hOm);\ZZ))$ is the image of the Euler class in $\HHH^2(\GG;\ZZ)$?
\end{question}
By Theorem \ref{thm:main_gen} applied to (\ref{seq:birman}) and the action $\rho_{S}$, the crossed homomorphism $k_{\widetilde{\rho_S}}$ provides an answer to the question.
In this case, the injectivity of $\phi$ implies the following.

\begin{corollary}
  The class $[k_{\trho}] \in \HHH^1(\hGG;\HHH^1(\pi_1(\hOm);\ZZ))$ is non-zero if $\rho \colon G \to \h$ is non-trivial.
\end{corollary}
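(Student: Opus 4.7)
The plan is to combine Theorem \ref{thm:main} with two facts already recorded in the introduction: the injectivity of $\phi$, and the non-vanishing of the pullback Euler class $\rho^* e$ for any non-trivial $\rho$.

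By Theorem \ref{thm:main}, the class $[k_{\trho}]$ equals $\phi(\rho^* e)$, so the corollary reduces to showing that $\phi(\rho^* e) \neq 0$. The exact sequence (\ref{seq:seven-term}) begins with $0 \to \HHH^2(\GG; \ZZ) \xrightarrow{\phi} \cdots$, so $\phi$ is injective, and it is enough to prove that $\rho^* e$ is non-zero in $\HHH^2(\GG; \ZZ) \cong \ZZ$.

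For the latter, I would appeal to the Calegari--Chen classification recalled in the introduction: any non-trivial action $\rho \colon \GG \to \h$ satisfies $\rho^* e = \pm \rho_S^* e$, where $\rho_S$ is the action on the simple circle. Since $\rho_S^* e$ is by definition the Euler class of $\GG$, a non-zero element of $\HHH^2(\GG; \ZZ) \cong \ZZ$, we conclude that $\rho^* e \neq 0$, and hence $[k_{\trho}] = \phi(\rho^* e) \neq 0$.

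There is no genuine obstacle here: the corollary is a purely formal consequence of Theorem \ref{thm:main}, the injectivity of $\phi$ in (\ref{seq:seven-term}), and the non-triviality of the Euler class of $\GG$, all of which are available before the statement of the corollary.
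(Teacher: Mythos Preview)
Your argument is correct and is exactly the intended one: the paper gives no separate proof of the corollary, treating it as an immediate consequence of Theorem~\ref{thm:main}, the injectivity of $\phi$ in (\ref{seq:seven-term}), and the non-vanishing of $\rho_S^*e$ recorded (via the cited references) in the introduction. The only cosmetic point is that calling $\rho_S^*e$ ``the Euler class of $\GG$'' does not by itself make it non-zero; the non-vanishing comes from the cited result that $\rho_S^*e$ generates $\HHH^2(\GG;\ZZ)\cong\ZZ$, which you should cite rather than treat as definitional.
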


\section{Preliminaries}

\subsection{Group cohomology and crossed homomorphisms}\label{subsec:prel_crossed}
In this subsection, we review the definition of group cohomology and crossed homomorphism.
We refer the reader to \cite{brown82} for details.

Let $G$ be a group and $M$ a left $G$-module.
The $G$-action on $M$ will be denoted by ${}^{g}x$ for every $g \in G$ and $x \in M$.
We set $C^n(G;M) = \{ c \colon G^n \to M \}$ for every $n > 0$ and $C^0(G;M) = M$.
An element of $C^n(G;M)$ is called an \textit{$n$-cochain on $G$ with coefficients in $M$}.
The coboundary map $\delta \colon C^n(G;M) \to C^{n+1}(G;M)$ is defined by
\begin{align*}
  &\delta c (g_1, \cdots, g_{n+1}) \\
  & = {}^{g_1}\big( c(g_2, \cdots, g_{n+1}) \big) + \sum_{i = 1}^{n} (-1)^{i}c(g_1, \cdots, g_i g_{i+1}, \cdots, g_{n+1}) + (-1)^{n+1} c(g_1, \cdots, g_{n})
\end{align*}
for every $c \in C^n(G;M)$ and $g_1, \cdots, g_{n+1} \in G$.
The homology of the cochain complex $(C^*(G;M), \delta)$ is called the \textit{group cohomology of $G$ with coefficients in $M$} and denoted by $\HHH^*(G;M)$.

A map $k \colon G \to M$ is called a \textit{crossed homomorphism} if the equality
\[
  k(g_1 g_2) = k(g_1) + {}^{g_1}\big(k(g_2) \big)
\]
holds for every $g_1, g_2 \in G$.
By definition, a map $k \colon G \to M$ is a crossed homomorphism if and only if $k$ is a cocycle in $C^1(G;M)$.
Hence, any first cohomology class in $\HHH^1(G;M)$ is represented by a crossed homomorphism.
Note that, if $M$ is a trivial $G$-module, a crossed homomorphism is just a homomorphism from $G$ to $M$.

A crossed homomorphism relating to mapping class groups (of finite-type surfaces) has been constructed (\cite{MR1030850}, \cite{MR1617632} and references therein; also \cite{2301.06247}).
Our crossed homomorphism is similar to one introduced recently in \cite{2301.06247}.

\subsection{The Poincar\'{e} translation number and the Euler class}

Let us recall the definition of the Poincar\'{e} translation number and Euler cocycles induced from it.
Let $T \colon \RR \to \RR$ be the translation by one.
One of the descriptions of the universal covering group $\th$ is given by
\[
  \th = \{ f \in \Homeo(\RR) \colon f \circ T = T \circ f \}.
\]
Under this description, the Poincar\'{e} translation number $\tau \colon \th \to \RR$ is defined by
\[
  \tau(f) = \lim_{n \to \infty} \frac{f^n(0)}{n}.
\]
By the definition of $\tau$ and the fact that $T$ is in the center of $\th$, we have
\[
  \tau(f \circ T^m) = \tau(f) + m
\]
for every $m \in \ZZ$.
Note that the Poincar\'{e} translation number is invariant under conjugation, that is, the equality
\[
  \tau(f_2^{-1} f_1 f_2) = \tau(f_1)
\]
holds for every $f_1, f_2 \in \th$ (see \cite[Proposition 5.3]{MR1876932}).

For every $f,g \in \h$ and their lifts $\tf, \tg $, we set
\[
  \chi(f,g) = \tau(\tf \tg) - \tau(\tf) - \tau(\tg) \in \RR.
\]
This $\chi(f,g)$ does not depend on the choice of the lifts.
Moreover, it turns out that the cochain $\chi \in C^2(\h;\RR)$ is a cocycle representing the real Euler class $e_{\RR} \in \HHH^2(\h;\RR)$.
This cocycle $\chi$ was introduced by Matsumoto \cite{MR848896}, and is called the \textit{canonical Euler cocycle}.

In the proof of Theorem \ref{thm:main_gen}, we will use a variant of the canonical Euler cocycle defined as follows:
Let $\lfloor \cdot \rfloor$ denote the floor function.
For every $f,g \in \h$ and their lifts $\tf, \tg $, we set
\begin{align}\label{int_euler_cocycle}
  \chi_{\ZZ}(f,g) = \lfloor\tau(\tf \tg)\rfloor - \lfloor\tau(\tf)\rfloor - \lfloor\tau(\tg)\rfloor \in \ZZ.
\end{align}
This $\chi_{\ZZ}(f,g)$ is also independent of the choice of the lifts.
This integer-valued cocycle $\chi_{\ZZ} \in C^2(\h;\ZZ)$ represents the \textit{integral} Euler class $e \in \HHH^2(\h;\ZZ)$.
Indeed, the function $\widetilde{b} \colon \th \to \RR$ defined by
\[
  \widetilde{b}(\tf) = \tau(\tf) - \lfloor \tau(\tf) \rfloor
\]
descends to a function $b \colon \h \to \RR$, and hence $\chi = \chi_{\ZZ} + \delta b \in C^2(\h;\RR)$.
This implies that $\chi_{\ZZ}$ represents the \textit{real} Euler class in $\HHH^2(\h;\RR)$.
Since the change of coefficients homomorphism $\HHH^2(\h;\ZZ) \to \HHH^2(\h;\RR)$ is injective, the cocycle $\chi_{\ZZ}$ represents the integral Euler class.

The following fact on the integral Euler class is used in the construction of the crossed homomorphism in the introduction.
\begin{proposition}[{see \cite[Subsection 6.2]{MR1876932}}]\label{prop:lift_euler_class}
  Let $G$ be a group and $\rho \colon G \to \h$ a homomorphism.
  The homomorphism $\rho$ lifts to a homomorphism $\trho \colon G \to \th$ if and only if the Euler class $\rho^* e \in \HHH^2(G;\ZZ)$ of $\rho$ is eqaul to zero.
\end{proposition}

\section{On the crossed homomorphism}

Let $1 \to K \xrightarrow{i} \GG \xrightarrow{p} G \to 1$ be a group extension.
Recall that the map $k'_{\trho} \colon \GG \to \HHH^1(K;\ZZ)$ is defined by
\[
  k'_{\trho}(\gg) = \trho^*\tau - {}^{\gg}(\trho^*\tau)
\]
for every $\gg \in \GG$, where $\trho \colon K \to \th$ is a lift of the composition of $i \colon K \to \GG$ and $\rho \colon \GG \to \h$.

\begin{lemma}\label{lem:welldef}
  The map $k'_{\trho} \colon \GG \to \HHH^1(K;\ZZ)$ is well-defined.
\end{lemma}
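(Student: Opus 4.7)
The plan is to verify two claims about the map $k'_{\trho}(\gg)\colon \pi_1(\hOm) \to \RR$: first, that its values actually lie in $\ZZ$; second, that it is a group homomorphism. Since $\ZZ$ is a trivial $\pi_1(\hOm)$-module, a homomorphism $\pi_1(\hOm) \to \ZZ$ is precisely a $1$-cocycle in $C^1(\pi_1(\hOm);\ZZ)$ and therefore represents a well-defined class in $\HHH^1(\pi_1(\hOm);\ZZ)$, which is what the lemma asks for.

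For integrality, I would fix $\gg \in \GG$ and $x \in \pi_1(\hOm)$ and pick any lift $\widetilde{\rho(\gg)} \in \th$ of $\rho(\gg)$. Because $\pi_1(\hOm)$ is normal in $\GG$ and $\trho$ lifts $\rho \circ i$, both $\trho(\gg^{-1}x\gg)$ and $\widetilde{\rho(\gg)}^{-1}\trho(x)\widetilde{\rho(\gg)}$ are lifts to $\th$ of the same element $\rho(\gg)^{-1}\rho(i(x))\rho(\gg) \in \h$. They therefore differ by $T^m$ for some $m \in \ZZ$. Applying $\tau$ and combining its conjugation invariance with the identity $\tau(f\circ T^m) = \tau(f) + m$ recalled in Section~2, I obtain $\tau(\trho(\gg^{-1}x\gg)) - \tau(\trho(x)) = m$, so $k'_{\trho}(\gg)(x) = -m \in \ZZ$.

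For the homomorphism property on a pair $x,y \in \pi_1(\hOm)$, I would use the canonical Euler cocycle $\chi \in C^2(\h;\RR)$: since $\trho$ is itself a homomorphism,
\[
  \tau(\trho(xy)) - \tau(\trho(x)) - \tau(\trho(y)) = \chi(\rho(i(x)), \rho(i(y))).
\]
The same identity applied to $\gg^{-1}x\gg$ and $\gg^{-1}y\gg$ gives the analogous defect with arguments $\rho(\gg)^{-1}\rho(i(x))\rho(\gg)$ and $\rho(\gg)^{-1}\rho(i(y))\rho(\gg)$. A short computation at the level of lifts shows that $\chi$ is invariant under simultaneous conjugation in $\h$, a direct consequence of the conjugation invariance of $\tau$; hence the two defect terms agree and cancel in $k'_{\trho}(\gg)(xy) - k'_{\trho}(\gg)(x) - k'_{\trho}(\gg)(y)$, establishing additivity. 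The only step calling for genuine care is the integrality check, where it is essential that $\trho$ is a bona fide lift of $\rho\circ i$ (rather than merely a quasimorphism), so that its values on $\gg$-conjugate elements match conjugates in $\th$ modulo the central kernel $\langle T\rangle$; beyond this bookkeeping of lifts I do not anticipate a serious obstacle.
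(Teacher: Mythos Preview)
Your proof is correct. The integrality argument is identical to the paper's: both compare $\trho(\gg^{-1}x\gg)$ with $\widetilde{\rho(\gg)}^{-1}\trho(x)\widetilde{\rho(\gg)}$, observe that they differ by a power of the central element $T$, and apply conjugation invariance of $\tau$.

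For additivity you take a slightly different route. The paper simply multiplies the two lift identities
\[
  \trho(\gg^{-1}x\gg)=\widetilde{\rho(\gg)}^{-1}\trho(x)\widetilde{\rho(\gg)}\circ T^{-n},\qquad
  \trho(\gg^{-1}y\gg)=\widetilde{\rho(\gg)}^{-1}\trho(y)\widetilde{\rho(\gg)}\circ T^{-m},
\]
uses that $\trho$ is a homomorphism and $T$ is central, and reads off $k'_{\trho}(\gg)(xy)=n+m$. You instead rewrite the defect $k'_{\trho}(\gg)(xy)-k'_{\trho}(\gg)(x)-k'_{\trho}(\gg)(y)$ as a difference of two values of the canonical Euler cocycle $\chi$ and appeal to the invariance of $\chi$ under simultaneous conjugation in $\h$. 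This is a legitimate alternative packaging of the same underlying fact (both ultimately rest on conjugation invariance of $\tau$); the paper's route is more self-contained, while yours highlights that the obstruction to additivity is exactly the conjugation-invariant cocycle $\chi$, a viewpoint that resonates with the later use of $\chi_{\ZZ}$ in the proof of the main theorem.
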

\begin{proof}
  First we show that the map $k'_{\trho}(\gg) \colon K \to \RR$ is integer-valued for every $\gg \in \GG$.
  We take $\gg \in \GG$ and $x \in K$.
  By the equality
  \[
    \pi(\trho (\gg^{-1} x \gg)) = \rho(\gg^{-1} x \gg) = \rho(\gg)^{-1} \rho(x) \rho(\gg),
  \]
  there exists an integer $n$ such that
  \[
    \trho(\gg^{-1} x \gg) = \widetilde{\rho(\gg)}^{-1} \trho(x) \widetilde{\rho(\gg)} \circ T^{-n},
  \]
  where $\widetilde{\rho(\gg)} \in \th$ is a lift of $\rho(\gg) \in \h$.
  Note that the integer $n$ is independent of the choice of the lift $\widetilde{\rho(\gg)}$ since $T$ is in the center of $\th$.
  Hence we have
  \begin{align}\label{calc_k}
    \big(k'_{\trho}(\gg) \big)(x) &= \tau(\trho(x)) - \tau(\trho(\gg^{-1}x\gg))\\
    &= \tau(\trho(x)) - \tau(\widetilde{\rho(\gg)}^{-1} \trho(x) \widetilde{\rho(\gg)} \circ T^{-n}) \nonumber \\
    &= \tau(\trho(x)) - \big(\tau(\widetilde{\rho(\gg)}^{-1} \trho(x) \widetilde{\rho(\gg)}) - n \big) \nonumber \\
    &= \tau(\trho(x)) - \tau(\trho(x)) + n \nonumber \\
    &= n \nonumber
  \end{align}
  by the conjugation-invariance of $\tau$.
  Hence the map $k'_{\trho}(\gg)$ is integer-valued for every $\gg \in \GG$.

  To see that the map $k'_{\trho}(\gg) \colon K \to \ZZ$ is a homomorphism, we take $x, y \in K$.
  By the above argument, there exist integers $n$ and $m$ such that
  \begin{align*}
    \trho(\gg^{-1} x \gg) = \widetilde{\rho(\gg)}^{-1} \trho(x) \widetilde{\rho(\gg)} \circ T^{-n}, \\
    \trho(\gg^{-1} y \gg) = \widetilde{\rho(\gg)}^{-1} \trho(y) \widetilde{\rho(\gg)} \circ T^{-m}.
  \end{align*}
  Then, we have
  \begin{align*}
    \trho(\gg^{-1} xy \gg) = \widetilde{\rho(\gg)}^{-1} \trho(xy) \widetilde{\rho(\gg)} \circ T^{-n-m}.
  \end{align*}
  By the calculation same as in (\ref{calc_k}), we have
  \[
    \big(k'_{\trho}(\gg)\big)(xy) = n+m = \big(k'_{\trho}(\gg)\big)(x) + \big(k'_{\trho}(\gg)\big)(y).
  \]
  Hence the map $k'_{\trho}(\gg)$ is a homomorphism.
\end{proof}

\begin{lemma}\label{lem:crossed}
  The map $k'_{\trho} \colon \GG \to \HHH^1(K;\ZZ)$ is a crossed homomorphism with respect to the left $\GG$-action on $\HHH^1(K;\ZZ)$ induced from the conjugation.
\end{lemma}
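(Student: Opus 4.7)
The plan is to verify the crossed-cocycle identity
\[
k'_{\trho}(\gg_1 \gg_2) \;=\; k'_{\trho}(\gg_1) + {}^{\gg_1}\bigl(k'_{\trho}(\gg_2)\bigr)
\]
by a direct telescoping calculation, exploiting the fact that the defining formula $k'_{\trho}(\gg) = \trho^*\tau - {}^{\gg}(\trho^*\tau)$ already has the shape of a principal coboundary. To make this work cleanly, I would first lift the discussion to the ambient cochain group $C^1(\pi_1(\hOm);\RR)$, in which the cochain $\trho^*\tau$ itself lives. Note that a priori $\trho^*\tau$ is neither a homomorphism nor integer-valued, so it is not an element of $\HHH^1(\pi_1(\hOm);\ZZ)$; only the differences $\trho^*\tau - {}^{\gg}(\trho^*\tau)$ are, by Lemma \ref{lem:welldef}.

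Within $C^1(\pi_1(\hOm);\RR)$, the crossed-cocycle identity is purely formal: expanding the right-hand side gives
\[
\bigl(\trho^*\tau - {}^{\gg_1}(\trho^*\tau)\bigr) + {}^{\gg_1}\bigl(\trho^*\tau - {}^{\gg_2}(\trho^*\tau)\bigr),
\]
which telescopes to $\trho^*\tau - {}^{\gg_1\gg_2}(\trho^*\tau) = k'_{\trho}(\gg_1\gg_2)$, using only the compatibility ${}^{\gg_1}\bigl({}^{\gg_2}\varphi\bigr) = {}^{\gg_1\gg_2}\varphi$ of the conjugation action, which is immediate from $({}^{\gg}\varphi)(x) = \varphi(\gg^{-1} x \gg)$. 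Since by Lemma \ref{lem:welldef} both sides are already integer-valued homomorphisms, and since the conjugation $\GG$-action on $\Hom(\pi_1(\hOm),\ZZ) = \HHH^1(\pi_1(\hOm);\ZZ)$ is simply the restriction of the one on $C^1(\pi_1(\hOm);\RR)$, the identity descends to the intended statement.

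Main obstacle: essentially none of substance. The real content, namely that $\trho^*\tau - {}^{\gg}(\trho^*\tau)$ is a bona fide integer-valued homomorphism, has already been absorbed into Lemma \ref{lem:welldef}; what remains is formal bookkeeping. The only thing worth being careful about is not conflating the real-valued cochain $\trho^*\tau$ (where the telescoping takes place) with its integer-valued image under the difference operation (where the crossed-homomorphism identity is asserted).
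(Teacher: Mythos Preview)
Your proof is correct and is essentially the same telescoping argument as the paper's one-line proof; you are just more explicit about the ambient group $C^1(\pi_1(\hOm);\RR)$ in which the telescoping takes place, which is a reasonable clarification since $\trho^*\tau$ itself is not in $\HHH^1(\pi_1(\hOm);\ZZ)$.
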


\begin{proof}
  For every $\gg_1, \gg_2 \in \GG$, we have
  \[
    k'_{\trho}(\gg_1 \gg_2) = \trho^* \tau - {}^{\gg_1 \gg_2}(\trho^* \tau)
    = \trho^* \tau - {}^{\gg_1} (\trho^* \tau) + {}^{\gg_1} (\trho^* \tau) - {}^{\gg_1 \gg_2}(\trho^* \tau) = k'_{\trho}(\gg_1) + {}^{\gg_1} k'_{\trho}(\gg_2).
  \]
\end{proof}

\begin{remark}
  The crossed homomorphism $k'_{\trho}$ depends on the choice of $\trho$.
  However, the class $[k'_{\trho}]$ in $\HHH^1(\GG;\HHH^1(K;\ZZ))$ \textit{does not} depend on the choice of $\trho$.
  Indeed, we take two lifts $\trho_1$ and $\trho_2$ of $\rho$.
  Then we have
  \[
    \delta (\trho_1^* \tau - \trho_2^* \tau) = \trho_1^* \delta \tau - \trho^* \delta \tau = -(\trho_1^* \pi^* \chi + \trho_2^* \pi^* \chi) = -(i^* \rho^* \chi - i^* \rho^* \chi) = 0.
  \]
  Hence $\trho_1^* \tau - \trho_2^* \tau$ is a homomorphism.
  Moreover, since $\trho_1$ and $\trho_2$ are lifts of the same homomorphism $i \circ \rho$, the homomorphism $\trho_1^* \tau - \trho_2^* \tau$ is integer-valued.
  Therefore we have
  \begin{align*}
    (k'_{\trho_1} - k'_{\trho_2})(\gg) = \trho_1^* \tau - \trho_2^* \tau - {}^{\gg}(\trho_1^* \tau - \trho_2^* \tau),
  \end{align*}
  and this implies that the class $[k'_{\trho_1} - k'_{\trho_2}]$ is equal to zero in $\HHH^1(\GG;\HHH^1(K;\ZZ))$.
\end{remark}

\begin{lemma}\label{lem:descend}
  The map $k'_{\trho} \colon \GG \to \HHH^1(K;\ZZ)$ is trivial on $K$.
  In particular, the map $k'_{\trho} \colon \GG \to \HHH^1(K;\ZZ)$ descends to a map $k_{\trho} \colon G \to \HHH^1(K;\ZZ)$.
\end{lemma}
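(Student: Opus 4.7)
The plan is to verify the vanishing on $\pi_1(\hOm)$ by a direct computation, and then to extract the descent from the crossed homomorphism identity together with the fact that inner automorphisms act trivially on group cohomology.

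For the first part, I would take $y \in \pi_1(\hOm)$ and $x \in \pi_1(\hOm)$ and evaluate $(k'_{\trho}(i(y)))(x)$ using the formula established in the proof of Lemma \ref{lem:welldef}. The key observation is that $\trho(y) \in \th$ is already a lift of $\rho(i(y)) \in \h$, so I am free to take $\widetilde{\rho(i(y))} = \trho(y)$. Since $\trho$ is a genuine homomorphism, this gives the exact (rather than up to $T^{-n}$) identity
\[
  \trho(y^{-1} x y) = \trho(y)^{-1} \trho(x) \trho(y),
\]
which means the integer $n$ appearing in (\ref{calc_k}) is $0$. Hence $(k'_{\trho}(i(y)))(x) = 0$ for every $x \in \pi_1(\hOm)$, so $k'_{\trho}(i(y)) = 0$.

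For the descent, I would invoke the crossed homomorphism identity of Lemma \ref{lem:crossed}. For any $\gg \in \GG$ and $y \in \pi_1(\hOm)$, the vanishing just established gives
\[
  k'_{\trho}(\gg \cdot i(y)) = k'_{\trho}(\gg) + {}^{\gg} k'_{\trho}(i(y)) = k'_{\trho}(\gg),
\]
and likewise
\[
  k'_{\trho}(i(y) \cdot \gg) = k'_{\trho}(i(y)) + {}^{i(y)} k'_{\trho}(\gg) = {}^{i(y)} k'_{\trho}(\gg).
\]
To conclude that $k'_{\trho}$ is constant on cosets of $i(\pi_1(\hOm))$, I need ${}^{i(y)} k'_{\trho}(\gg) = k'_{\trho}(\gg)$ in $\HHH^1(\pi_1(\hOm);\ZZ)$. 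This is precisely the standard fact that a group acts trivially on its own cohomology by inner automorphisms: the conjugation action of $\pi_1(\hOm)$ on $\HHH^1(\pi_1(\hOm);\ZZ)$ is trivial. Combining these two identities, $k'_{\trho}$ factors through $\GG/i(\pi_1(\hOm)) = \hGG$ by the Birman exact sequence (\ref{seq:birman}), yielding the desired map $k_{\trho} \colon \hGG \to \HHH^1(\pi_1(\hOm);\ZZ)$.

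No serious obstacle is expected: the entire argument rests on the homomorphism property of the chosen lift $\trho$ (to handle the left action) and the triviality of inner actions on $\HHH^1$ (to handle the right action). The only mild subtlety is remembering that, although $\trho$ is only defined on $\pi_1(\hOm)$ and not on all of $\GG$, its restriction is a \emph{genuine} lift of $\rho \circ i$, so that no $T^{-n}$ correction appears when computing $\trho$ of a conjugate inside $\pi_1(\hOm)$.
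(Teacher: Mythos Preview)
Your argument is correct and matches the paper's proof. The vanishing on $\pi_1(\hOm)$ is the same computation (the paper phrases it directly via conjugation-invariance of $\tau$, which is exactly your observation that the integer $n$ in (\ref{calc_k}) is zero), and the descent via $k'_{\trho}(\gg\, i(y)) = k'_{\trho}(\gg)$ is identical; your additional check of left-coset constancy and the appeal to triviality of inner actions on $\HHH^1$ are unnecessary, since normality of $\pi_1(\hOm)$ makes the single right-coset identity sufficient.
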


\begin{proof}
  For every $x, y \in K \subset \GG$, we have
  \begin{align*}
    \big(k'_{\trho}(x)\big)(y) &= \trho^* \tau (y) - \big({}^{x}(\trho^* \tau)\big)(y) = \tau(\trho(y)) - \tau(\trho(x)^{-1}\trho(y)\trho(x))\\
    &= \tau(\trho(y)) - \tau(\trho(y)) = 0
  \end{align*}
  by the conjugation invariance of $\tau$.
  Hence $k'_{\trho}$ is trivial on $K$.
  In particular, we obtain
  \[
    k'_{\trho}(\gg x) = k'_{\trho}(\gg) + {}^{\gg} (k'_{\trho}(x)) = k'_{\trho}(\gg),
  \]
  this implies the lemma.
\end{proof}

\begin{remark}\label{rem:Chen}
  Let $S_g$ be a closed connected oriented surface of genus $g \geq 2$ and
  $\Mod_{g}$ (resp. $\Mod_{g,*}$) the mapping class group of $S_g$ (resp. the mapping class group of $S_g$ with once-marked point).
  In \cite{2301.06247}, Chen constructed a crossed homomorphism $\Mod_{g,*} \to \HHH^1(S_g;\ZZ)$ as follows:
  Let
  \[
    1 \to \pi_1(S_g) \xrightarrow{i} \Mod_{g,*} \to \Mod_g \to 1
  \]
  be the Birman exact sequence and $\rho \colon \Mod_{g,*} \to \h$ the Gromov boundary action.
  Contrasting to the above case, the restricted action $\rho \circ i \colon \pi_1(S_g) \to \h$ \emph{does not} lift to $\pi_1(S_g) \to \h$.
  However, for the natural surjection $q \colon F_{2g} \to \pi_1(S_g)$ from the free group $F_{2g} = \langle a_1, b_1, \cdots, a_g, b_g \rangle$ of rank $2g$, the composite $\rho \circ i \circ q \colon F_{2g} \to \h$ \emph{lifts} to $\trho \colon F_{2g} \to \th$.
  For every $\gg \in \Mod_{g,*}$, we take a representative automorphism $f_{\gg} \colon F_{2g} \to F_{2g}$ which fixes the element $[a_1, b_1] \cdots [a_g,b_g] \in F_{2g}$ (the existence is guaranteed by the Dehn-Nielsen-Baer theorem).
  Then the crossed homomorphism $R \colon \Mod_{g,*} \to \HHH^1(F_{2g};\ZZ) \cong \HHH^1(S_g;\ZZ)$ is defined by
  \[
    R(\gg) = \big((\trho^* \tau)\circ f_{\gg}\big) - \trho^* \tau.
  \]
  It is also shown in \cite{2301.06247} that the crossed homomorphism $k$ enjoys the property
  \[
    \big( R(x) \big)(y) = (2g-2) i(x,y)
  \]
  for every $x,y \in \pi_1(S_g)$ and their algebraic intersection number $i(x,y)$, which ensures that the crossed homomorphism $R$ gives the generator of $\HHH^1(\Mod_{g,*};\HHH^1(S_g;\ZZ)) \cong \ZZ$.
  This is contrasting to Lemma \ref{lem:descend}, and this discrepancy is based on the difference of the liftability of the actions $\rho \circ i \colon \pi_1(S_g) \to \h$ and $\rho \circ i \colon K \to \h$ to $\th$;
  $\rho \circ i \colon \pi_1(S_g) \to \h$ is not liftable but $\rho \circ i \colon K \to \h$ is liftable.
\end{remark}

\section{Proof of Theorem \ref{thm:main_gen}}
To prove Theorem \ref{thm:main_gen}, we recall the Hochschild-Serre spectral sequence.
Let
\begin{align}\label{seq:ex}
  1 \to K \xrightarrow{i} \GG \xrightarrow{p} G \to 1
\end{align}
be an exact sequence of groups and $M$ a $\GG$-module.
The Hochschild-Serre spectral sequence $(E_r^{p,q}, d_r^{p,q})$ is a first quadrant spectral sequence converging to $\HHH^*(\GG;M)$ with
\begin{align}\label{isom_e2}
  E_2^{p,q} \cong \HHH^p(G;\HHH^q(K;M)).
\end{align}
This spectral sequence is given as follows.

An $n$-cochain $c \in C^n(\GG;M)$ is called to be \textit{normalized} if
\[
  c(\gg_1, \cdots, \gg_n) = 0
\]
whenever $\gg_i = 1_{\GG}$ for some $i$.
Let $A^n(\GG;M)$ be the group of normalized $n$-cochains on $\GG$.
Note that the cohomology of the subcomplex $(A^*(\GG;M), \delta)$ is isomorphic to $\HHH^*(\GG;M)$.
We set
\begin{align*}
  A_j^{*n} = \{ f \in A^n(\GG;M) \colon f(\gg_1, \cdots, \gg_n) & \text{ depends only on } \\
  &\gg_1, \cdots, \gg_{n-j} \in \GG \text{ and } p(\gg_{n-j+1}), \cdots, p(\gg_n) \in G \}.
\end{align*}
This defines a decreasing filtration
\[
  A^n(\GG;M) = A_0^{*n} \supset A_1^{*n} \supset \cdots \supset A_n^{*n} \supset A_{n+1}^{*n} = 0.
\]
This filtration induces the Hochschild-Serre spectral sequence.
More precisely, $E_r^{p,q}$ is given by
\[
  E_r^{p,q} = Z_r^{p, q} / (Z_{r-1}^{p+1, q-1} + \delta A_{p-r+1}^{*p+q-1}),
\]
where $Z_r^{p,q}=\{ f \in A_{p}^{*p+q} \colon \delta f \in A_{p+r}^{*p+q-1} \}$, and isomorphism (\ref{isom_e2}) is induced from the map
\[
  r_p\colon Z_2^{p,q} \to C^p(G;C^q(K;M))
\]
defined by
\[
  r_p(f)(p(\gg_1), \cdots, p(\gg_p)) = f(*, \cdots, *, \gg_1, \cdots, \gg_p)
\]
(see \cite[Theorems 1 and 2]{hochschild_serre53}).

A first quadrant spectral sequence $(E_r^{p,q}, d_r^{p,q})$ which converges to $E^*$ induces an exact sequence
\[
  0 \to E_2^{0,1} \to E^1 \to E_2^{0,1} \to E_2^{2,0} \to F_1E^2 \to E_2^{1,1} \to E_2^{3,0},
\]
which is called the \textit{seven-term exact sequence}.
Here the map $F_1E^2 \to E_2^{1,1}$ is induced from the natural projection
\[
  F_1E^2 \to F_1E^2/F_2E^2 = E_{\infty}^{1,1} \subset E_2^{1,1}.
\]
When $(E_r^{p,q}, d_r^{p,q})$ is the Hochschild-Serre spectral sequecne, the filtration $F_pE^{p+q}$ is given by
\[
  F_pE^{p+q} = \HHH^{p+q}(\Im (A_{p}^{*\bullet} \to A^{*\bullet})),
\]
and it is known that $F_1E^{2} \cong \Ker \big(\HHH^2(\GG;M) \to \HHH^2(K;M)\big)$.
Hence the seven-term exact sequence for the Hochschild-Serre spectral sequence gives rise to
\begin{align*}
  0 &\to \HHH^1(G;\ZZ) \to \HHH^1(\GG;\ZZ) \to \HHH^1(K;\ZZ)^{\GG} \to \HHH^2(G;\ZZ) \\
  &\to \Ker\big(\HHH^2(\GG;\ZZ) \xrightarrow{i^*} \HHH^2(K;\ZZ)\big) \xrightarrow{\phi} \HHH^1(G;\HHH^1(K;\ZZ)) \xrightarrow{d_2^{1,1}} \HHH^3(G;\ZZ),
\end{align*}
and the image of a class $c \in \Ker\big(\HHH^2(\GG;M) \to \HHH^2(K;M)\big)$ under the map
\[
  \phi \colon \Ker\big(\HHH^2(\GG;M) \to \HHH^2(K;M)\big) \to \HHH^1(G;\HHH^1(K;M))
\]
is given as follows:
We take a cocycle $f \in A_1^{*2}$ representing $c$, and define a crossed homomorphism $\mathfrak{K}_f \colon G \to \HHH^1(K;M)$ by
\[
  \big(\mathfrak{K}_f(p(\gg))\big) (x) = f(x, \gg)
\]
for every $\gg \in \GG$ and $x \in K$.
Then, by definition, we obtain $\phi(c) = [\mathfrak{K}_f]$.


\begin{proof}[Proof of Theorem $\ref{thm:main_gen}$]
  Let $\chi_{\ZZ} \in C^2(\h;\ZZ)$ be the Euler cocycle defined by (\ref{int_euler_cocycle}).
  We identify $K$ with $i(K) \subset \GG$ through the map $i$ and define a function $a_{\trho} \colon \GG \to \ZZ$ by
  \begin{align*}
    a_{\trho}(\gg) =
    \begin{cases}
      \lfloor\tau(\trho(\gg))\rfloor & \gg \in K \\
      0 & \text{otherwise.}
    \end{cases}
  \end{align*}
  We take a section $s \colon G \to \GG$ satisfying $s(1_{G}) = 1_{\GG}$ and set
  \[
    x_{\gg} = s(p(\gg))^{-1} \gg \in K.
  \]
  Let us define a function $b_{\trho} \colon \GG \to \ZZ$ by
  \begin{align*}
    b_{\trho}(\gg) =
    \begin{cases}
      \lfloor\tau(\trho(\gg))\rfloor & \gg \in K \\
      (\rho^*\chi_{\ZZ} + \delta a_{\trho})(s(p(\gg)), x_{\gg}) & \text{otherwise}.
    \end{cases}
  \end{align*}
  Then, the cocycle $\rho^* \chi_{\ZZ} + \delta b_{\trho}$ represents the Euler class $\rho^*e$ and is contained in $A_1^{*2}$.
  To see $\rho^* \chi_{\ZZ} + \delta b_{\trho} \in A_1^{*2}$, we take $\gg \in \GG$ and $x, y \in K$.
  If $\gg \notin K$, we have
  \begin{align} \label{eq1}
    \delta b_{\trho}(s(p(\gg)) x, y) =& b_{\trho}(y) - b_{\trho}(s(p(\gg)) xy) + b_{\trho}(s(p(\gg)) x) \\
    =& \lfloor\tau(\trho(y))\rfloor - \rho^*\chi_{\ZZ}(s(p(\gg)), xy) - \delta a_{\trho}(s(p(\gg)), xy)\nonumber \\
    & +\rho^*\chi_{\ZZ}(s(p(\gg)), x) + \delta a_{\trho}(s(p(\gg)), x)\nonumber \\
    =& \lfloor\tau(\trho(y))\rfloor - \lfloor\tau(\trho(xy))\rfloor + \lfloor\tau(\trho(x))\rfloor - \rho^*\chi_{\ZZ}(s(p(\gg))x, y) + \rho^*\chi_{\ZZ}(x,y)\nonumber \\
    =& - \rho^*\chi_{\ZZ}(s(p(\gg))x, y),\nonumber
  \end{align}
  where the third equality is from $\delta \rho^*\chi_{\ZZ} = 0$ and the fourth equality is from the definition of $\chi_{\ZZ}$.
  If $\gg \in K$, we obtain
  \begin{align} \label{eq2}
    \delta b_{\trho}(s(p(\gg)) x, y) =& \delta b_{\trho}(x, y) = \lfloor \tau(\trho(y)) \rfloor - \lfloor \tau(\trho(xy)) \rfloor + \lfloor \tau(\trho(x)) \rfloor\\
    =& -\rho^* \chi_{\ZZ}(x, y) = -\rho^* \chi_{\ZZ}(s(p(\gg)) x, y). \nonumber
  \end{align}
  Equalities (\ref{eq1}) and (\ref{eq2}) imply that
  \begin{align}\label{eq3}
    (\rho^*\chi_{\ZZ} + \delta b_{\trho})(\gg, y) = 0
  \end{align}
  for every $\gg \in \GG$ and $y \in K$.
  Hence, for every $\gg_1, \gg_2 \in \GG$ and $y \in K$, we obtain
  \begin{align*}
    (\rho^*\chi_{\ZZ} + \delta b_{\trho})(\gg_1, \gg_2 y) &= -(\rho^*\chi_{\ZZ} + \delta b_{\trho})(\gg_2, y) + (\rho^*\chi_{\ZZ} + \delta b_{\trho})(\gg_1 \gg_2, y) + (\rho^*\chi_{\ZZ} + \delta b_{\trho})(\gg_1, \gg_2) \\
    &= (\rho^*\chi_{\ZZ} + \delta b_{\trho})(\gg_1, \gg_2).
  \end{align*}
  This implies that $\rho^*\chi_{\ZZ} + \delta b_{\trho} \in A_{1}^{*2}$.

  By the above description of $\phi$, the class $\phi(\rho^* e)$ is represented by the crossed homomorphism $\mathfrak{K}_{\rho^*\chi + \delta b_{\trho}}$.
  By the definition of $\mathfrak{K}_{\rho^*\chi + \delta b_{\trho}}$ and (\ref{eq3}), we have
  \begin{align*}
    \big(\mathfrak{K}_{\rho^*\chi_{\ZZ} + \delta b_{\trho}}(p(\gg))\big)(x) = (\rho^*\chi_{\ZZ} + \delta b_{\trho})(x, \gg) = (\rho^*\chi_{\ZZ} + \delta b_{\trho})(x, \gg) - (\rho^*\chi_{\ZZ} + \delta b_{\trho})(\gg,\gg^{-1}x\gg)
  \end{align*}
  for every $\gg \in \GG$ and $x \in K$.
  By the definition of the canonical Euler cocycle $\chi$ and the conjugation invariance of $\tau$, we have
  \begin{align*}
    &\rho^* \chi_{\ZZ}(x, \gg) + \rho^*\chi_{\ZZ}(\gg, \gg^{-1}x\gg) \\
    &= \lfloor\tau(\widetilde{\rho(x)}\widetilde{\rho(\gg)})\rfloor - \lfloor\tau(\widetilde{\rho(x)})\rfloor - \lfloor\tau(\widetilde{\rho(\gg)})\rfloor \\
    &\hspace{5mm} -\left(\lfloor\tau(\widetilde{\rho(\gg)} \cdot \widetilde{\rho(\gg)}^{-1}\widetilde{\rho(x)}\widetilde{\rho(\gg)})\rfloor - \lfloor\tau(\widetilde{\rho(\gg)})\rfloor - \lfloor\tau(\widetilde{\rho(\gg)}^{-1}\widetilde{\rho(x)}\widetilde{\rho(\gg)})\rfloor\right) \\
    &= \lfloor\tau(\widetilde{\rho(x)}\widetilde{\rho(\gg)})\rfloor - \lfloor\tau(\widetilde{\rho(x)})\rfloor - \lfloor\tau(\widetilde{\rho(\gg)})\rfloor \\
    &\hspace{5mm} -\left(\lfloor\tau(\widetilde{\rho(x)}\widetilde{\rho(\gg)})\rfloor - \lfloor\tau(\widetilde{\rho(\gg)})\rfloor - \lfloor\tau(\widetilde{\rho(x)})\rfloor\right) \\
    &= 0.
  \end{align*}
  Moreover, by the definition of $b_{\trho}$, we obtain
  \begin{align*}
    \delta b_{\trho}(x, \gg) - \delta b_{\trho}(\gg, \gg^{-1}x\gg) =& b_{\trho}(\gg) - b_{\trho}(x\gg) + b_{\trho}(x) - \big( b_{\trho}(\gg^{-1}x\gg) - b_{\trho}(x\gg) + b_{\trho}(\gg) \big)\\
    =& b_{\trho}(x) - b_{\trho}(\gg^{-1}x\gg) = \lfloor\tau(\trho(x))\rfloor - \lfloor\tau(\trho(\gg^{-1}x\gg))\rfloor.
  \end{align*}
  Since $\tau(\trho(x)) - \tau(\trho(\gg^{-1}x\gg)) \in \ZZ$, we have
  \[
    \lfloor\tau(\trho(x))\rfloor - \lfloor\tau(\trho(\gg^{-1}x\gg))\rfloor = \tau(\trho(x)) - \tau(\trho(\gg^{-1}x\gg)) = \big(k_{\trho}(\gg)\big)(x).
  \]
  Hence the equality $\mathfrak{K}_{\rho^*\chi + \delta b_{\trho}} = k_{\trho}$ holds.
  This implies that
  \[
    \phi(\rho^* e) = [\mathfrak{K}_{\rho^*\chi + \delta b_{\trho}}] = [k_{\trho}] \in \HHH^1(G, \HHH^1(K;\ZZ)).
  \]
\end{proof}

\section*{Acknowledgments}
The author would like to thank Lei Chen, Lvzhou Chen, Teruaki Kitano, Takayuki Morifuji, Shigeyuki Morita and Takuya Sakasai for helpful comments.
The author is supported by JSPS KAKENHI Grant Number JP21J11199.

\bibliographystyle{amsalpha}
\bibliography{references.bib}
\end{document}